\newtheorem{theorem}{Theorem}
\newtheorem{lemma}{Lemma}
\newtheorem{false statement}{False statement}
\theoremstyle{definition}
\newtheorem{claim}{Claim}
\newtheorem{case}{Case}
\begin{document}

\title{On star-wheel Ramsey numbers}

\author{{\sl Binlong Li}\footnote{The work is supported by
NSFC (No. 11271300), the Doctorate Foundation of Northwestern
Polytechnical University (No. cx201202) and the project NEXLIZ -
CZ.1.07/2.3.00/30.0038, which is co-financed by the
European Social Fund and the state budget of the Czech Republic.}\\
\small Department of Applied Mathematics\\
\small Northwestern Polytechnical University\\
\small Xi'an, Shaanxi 710072, P.R.~China\\
\small European Centre of Excellence NTIS \\
\small University of West Bohemia\\
\small 306 14 Pilsen, Czech Republic;\\
\small libinlong@mail.nwpu.edu.cn \and {\sl Ingo
Schiermeyer\footnote{Research was partly supported by
the DAAD-PPP project "Rainbow connection and cycles in graphs" with project-ID 56268242.}}\\
\small Institut f\"ur Diskrete Mathematik und Algebra\\
\small Technische Universit\"at Bergakademie Freiberg\\
\small 09596 Freiberg, Germany\\
\small Ingo.Schiermeyer@tu-freiberg.de}

\date{November 26, 2014}

\maketitle

\begin{abstract}
For two given graphs $G_1$ and $G_2$, the Ramsey number $R(G_1,G_2)$
is the least integer $r$ such that for every graph $G$ on $r$
vertices, either $G$ contains a $G_1$ or $\overline{G}$ contains a
$G_2$. In this note, we determined the Ramsey number
$R(K_{1,n},W_m)$ for even $m$ with $n+2\leq m\leq 2n-2$, where $W_m$
is the wheel on $m+1$ vertices, i.e., the graph obtained from a
cycle $C_m$ by adding a vertex $v$ adjacent to all vertices of the
$C_m$.

\medskip
\noindent {\bf Keywords:} Ramsey number; star; wheel
\smallskip

\noindent {\bf AMS Subject Classification:} 05C55, 05D10

\end{abstract}

\section{Introduction}

Throughout this paper, all graphs are finite and simple. For a pair
of graphs $G_1$ and $G_2$, the \emph{Ramsey number} $R(G_1,G_2)$, is
defined as the smallest integer $r$ such that for every graph $G$ on
$r$ vertices, either $G$ contains a $G_1$ or $\overline{G}$ contains
a $G_2$, where $\overline{G}$ is the complement of $G$. Note that
$R(G_1,G_2)=R(G_2,G_1)$. We denote by $P_n$ ($n\geq 1$) and $C_n$
($n\geq 3$) the path and cycle on $n$ vertices, respectively. The
bipartite graph $K_{1,n}$ ($n\geq 2$) is called a \emph{star}. The
\emph{wheel} $W_n$ ($n\geq 3$) is the graph obtained by joining a
vertex and a cycle $C_n$.

In this note we consider the Ramsey numbers for stars versus wheels.
There are many results on this area. Hasmawati \cite{Hasmawati}
determined the Ramsey number $R(K_{1,n},W_m)$ for $m\geq 2n$.

\begin{theorem}[Hasmawati \cite{Hasmawati}]\label{ThHa}
If $n\geq 2$ and $m\geq 2n$, then
$$R(K_{1,n},W_m)=\left\{\begin{array}{ll}
n+m-1,  & \mbox{if both } n \mbox{ and } m \mbox{ are even};\\
n+m,    & \mbox{otherwise}.
\end{array}\right.$$
\end{theorem}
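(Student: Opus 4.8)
The plan is to translate the Ramsey condition into a minimum-degree condition on the complement and then reduce the wheel-embedding problem to a Hamilton-cycle problem handled by Dirac's theorem. Write $r$ for the number of vertices of the host graph $G$ and set $H=\overline{G}$. Since $G$ contains $K_{1,n}$ exactly when $\Delta(G)\geq n$, the assumption that $G$ is $K_{1,n}$-free is equivalent to $\Delta(G)\leq n-1$, that is, $\delta(H)\geq (r-1)-(n-1)=r-n$. Embedding $W_m$ into $H$ amounts to finding a vertex $v$ (the hub) whose neighbourhood $N_H(v)$ contains a cycle $C_m$.

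The engine of the whole argument is the following local observation. Suppose $v$ is a vertex of $H$ with $\deg_H(v)\geq m$, and choose any $S\subseteq N_H(v)$ with $|S|=m$. For every $u\in S$ we have
$$\deg_{H[S]}(u)\geq \deg_H(u)-(r-m)\geq (r-n)-(r-m)=m-n,$$
because $u$ can lose only the $r-m$ vertices outside $S$. Since $m\geq 2n$ gives $m-n\geq m/2$, the induced graph $H[S]$ on $m$ vertices has minimum degree at least $m/2$, so by Dirac's theorem it has a Hamilton cycle, which is a $C_m$; together with the hub $v$, to which all of $S$ is adjacent, this produces $W_m$ in $H$. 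Thus \emph{as soon as some vertex of $H$ has $H$-degree at least $m$, we find $W_m$}. Notice that, because $|S|=m$ exactly, the Hamilton cycle already has the desired length, so no appeal to pancyclicity is needed.

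For the upper bound in the generic (``otherwise'') case I would take $r=n+m$. Then $\delta(H)\geq r-n=m$, so every vertex qualifies as a hub and the local observation immediately yields $W_m$; hence $R(K_{1,n},W_m)\leq n+m$. The matching lower bound comes from a regular construction: an $(n-1)$-regular graph $G$ on $n+m-1$ vertices has $\Delta(G)=n-1<n$ (no $K_{1,n}$) and complement $H$ that is $(m-1)$-regular, so every neighbourhood in $H$ has only $m-1<m$ vertices and no vertex can serve as a hub; hence $H$ is $W_m$-free and $R(K_{1,n},W_m)\geq n+m$. Such a regular graph exists precisely when $(n-1)(n+m-1)$ is even, which holds in every case except when $n$ and $m$ are both even, and can be realised explicitly by a circulant.

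The one delicate case, and the main obstacle, is when $n$ and $m$ are both even, where the answer drops to $n+m-1$. For the lower bound, an $(n-1)$-regular graph on $n+m-2$ vertices now exists (since $(n-1)(n+m-2)$ is even when $n,m$ are even), and its complement is $(m-2)$-regular and hence $W_m$-free, giving $R\geq n+m-1$. For the matching upper bound I take $r=n+m-1$, where merely $\delta(H)\geq m-1$: a vertex of $H$-degree exactly $m-1$ has too small a neighbourhood to host $C_m$, so the local observation does not apply directly. Here parity saves the day. If every vertex of $H$ had degree exactly $m-1$, then every vertex of $G$ would have degree exactly $n-1$; but $n$ even makes $n-1$ odd while $r=n+m-1$ is odd, so $G$ would have an odd number of odd-degree vertices, contradicting the handshake lemma. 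Hence some vertex has $\deg_G(v)\leq n-2$, i.e.\ $\deg_H(v)\geq m$, and the local observation applies to give $W_m$. This forces $R(K_{1,n},W_m)\leq n+m-1$ and completes the argument; the only genuinely non-routine step is isolating this parity obstruction.
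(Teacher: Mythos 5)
The paper does not actually prove Theorem~\ref{ThHa}; it is quoted without proof from Hasmawati's thesis \cite{Hasmawati}, so there is no in-text argument to compare against line by line. Judged on its own, your proof is correct and complete. The complement translation $\delta(\overline{G})\ge r-n$ is right; the local observation is sound, since a vertex $u$ in an $m$-subset $S$ of a hub's neighbourhood loses at most $r-m$ neighbours, leaving $\deg_{H[S]}(u)\ge (r-n)-(r-m)=m-n\ge m/2$ exactly because $m\ge 2n$, so Dirac's theorem (in its Hamilton-cycle form, which follows from Theorem~\ref{ThDi}) gives the $C_m$ on $S$; the $(n-1)$-regular constructions exist in precisely the claimed parity cases; and the handshake argument correctly isolates why the value drops by one when $n$ and $m$ are both even. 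Your method is in fact the same template the authors use for their own results: the lower bound mirrors the regular-graph construction of Lemma~\ref{LeRegular} and Section~3 (there applied to $(m/2-1)$-regular graphs), and the upper bound mirrors Claim~\ref{ClnuH}, where the identical parity trick forces one vertex of degree one higher than the minimum. What your argument buys, and why it cannot be pushed below $m=2n$, is that the hypothesis $m\ge 2n$ makes an arbitrary $m$-subset of a neighbourhood already Dirac-dense, so the heavy machinery of Section~4 (weak pancyclicity via Theorem~\ref{ThBrFaGo}, bipartite cycles via Theorem~\ref{ThJa}, and the block decomposition of $H$) is unnecessary here.
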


So from now on we consider the case that $m\leq 2n-1$. For odd $m$,
Chen et al. \cite{ChenZhangZhang} showed that if $m\leq n+2$, then
$R(K_{1,n},W_m)=3n+1$. Hasmawati et al.
\cite{HasmawatiBaskoroAssiyatun} proved that the values remain the
same even if $m\leq 2n-1$.

\begin{theorem}[Hasmawati et al.
\cite{HasmawatiBaskoroAssiyatun}]\label{ThHaBaAs}
  If $3\leq m\leq 2n-1$ and $m$ is odd, then
  $$R(K_{1,n},W_m)=3n+1.$$
\end{theorem}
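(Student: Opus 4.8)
The plan is to prove the two matching inequalities $R(K_{1,n},W_m)\ge 3n+1$ and $R(K_{1,n},W_m)\le 3n+1$ separately. The one elementary fact used throughout is that a graph contains $K_{1,n}$ as a subgraph if and only if it has a vertex of degree at least $n$; thus a graph is $K_{1,n}$-free exactly when its maximum degree is at most $n-1$.

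For the lower bound I would produce a single graph $G_0$ on $3n$ vertices that is simultaneously $K_{1,n}$-free and has a $W_m$-free complement. The natural choice is $G_0=3K_n$, three disjoint cliques of order $n$: every vertex has degree $n-1$, so $G_0$ contains no $K_{1,n}$. Its complement is the complete tripartite graph $K_{n,n,n}$, and this is exactly where the parity hypothesis becomes essential. Since $m$ is odd, the rim of $W_m$ is an odd cycle, so $\chi(W_m)=1+\chi(C_m)=4$, while $\chi(K_{n,n,n})=3$; as no subgraph can have chromatic number exceeding that of its host, $K_{n,n,n}$ contains no $W_m$. Hence $R(K_{1,n},W_m)>3n$.

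For the upper bound, let $G$ be any $K_{1,n}$-free graph on $3n+1$ vertices and put $H=\overline{G}$. Then $\Delta(G)\le n-1$, so every vertex of $H$ has degree at least $(3n+1-1)-(n-1)=2n+1$, i.e.\ $\delta(H)\ge 2n+1$. Fix any vertex $v$, let $d=|N_H(v)|\ge 2n+1$, and consider $G'=H[N_H(v)]$. The crucial estimate is that this neighbourhood stays dense: for $u\in N_H(v)$, inclusion--exclusion gives $d_{G'}(u)=|N_H(u)\cap N_H(v)|\ge |N_H(u)|+|N_H(v)|-|V(H)|\ge (2n+1)+d-(3n+1)=d-n$, so $\delta(G')\ge d-n>d/2$ because $d\ge 2n+1$. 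Thus $G'$ satisfies Dirac's minimum-degree condition, and by the classical pancyclicity theorem of Bondy (a graph on $N$ vertices with $\delta\ge N/2$ is pancyclic unless it is $K_{N/2,N/2}$, which is ruled out here by the strict inequality) $G'$ contains a cycle of every length from $3$ to $d$. Since $3\le m\le 2n-1<d$, such a $C_m$ lies inside $N_H(v)$, and adjoining $v$, which is adjacent in $H$ to all of it, yields a copy of $W_m$ in $\overline{G}$. Therefore $R(K_{1,n},W_m)\le 3n+1$.

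Combining the two bounds gives the claimed equality. I expect the only genuinely delicate point to be the density estimate inside $N_H(v)$: one must verify both that the induced neighbourhood is large (at least $2n+1$ vertices) and that its minimum degree strictly exceeds half its order, since this is precisely what upgrades a single cycle $C_m$ to the full wheel $W_m$ by supplying the hub $v$. It is worth noting that the upper bound argument never uses that $m$ is odd and in fact covers all $m$ with $3\le m\le 2n-1$; parity is decisive only in the lower bound, through the chromatic-number gap, which is why the even case demands a separate and substantially more delicate treatment.
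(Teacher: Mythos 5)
Your proposal is correct and complete. Note that the paper itself offers no proof of this statement: Theorem~\ref{ThHaBaAs} is quoted from Hasmawati, Baskoro and Assiyatun, so there is nothing in the text to compare your argument against line by line. On its own merits, both halves of your argument check out. The lower bound via $3K_n$ is the standard extremal construction, and the chromatic obstruction is exactly right: for odd $m$ the rim of $W_m$ is an odd cycle, so $\chi(W_m)=4>3=\chi(K_{n,n,n})$, and parity is used only here. For the upper bound, the computation $\delta(H)\geq 2n+1$, hence $d_{G'}(u)\geq d_H(u)+d_H(v)-(3n+1)\geq d-n>d/2$ for every $u$ in $G'=H[N_H(v)]$, is valid, and the strict inequality does rule out the exceptional graph $K_{d/2,d/2}$ in Bondy's pancyclicity theorem, so $G'$ contains $C_m$ for every $3\leq m\leq d$, and $v$ supplies the hub. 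Your closing remark is also accurate and worth keeping in mind when reading the rest of the paper: the bound $R(K_{1,n},W_m)\leq 3n+1$ holds for all $3\leq m\leq 2n-1$ regardless of parity, and it is only the collapse of the lower bound for even $m$ (where $\overline{3K_n}=K_{n,n,n}$ does contain even wheels) that forces the separate, more delicate analysis carried out in Theorems~\ref{ThLower} and~\ref{ThExactly}. One stylistic point: the paper's own arguments in Section~4 use the weaker weak-pancyclicity machinery of Brandt--Faudree--Goddard (Theorem~\ref{ThBrFaGo}) together with Dirac's theorem rather than Bondy's pancyclicity theorem, because in that setting the minimum-degree condition $\delta\geq\nu/2$ is unavailable; in your setting the stronger hypothesis $\delta(G')>d/2$ makes the cleaner Bondy route legitimate.
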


So it is remains the case when $m\leq 2n-2$ and $m$ is even.
Surahmat and Baskoro \cite{SurahmatBaskoro} determined the Ramsey
numbers of stars versus $W_4$.

\begin{theorem}[Surahmat and Baskoro \cite{SurahmatBaskoro}]\label{ThSuBa}
If $n\geq 2$, then
$$R(K_{1,n},W_4)=\left\{\begin{array}{ll}
2n+1,   & \mbox{if } n \mbox{ is even};\\
2n+3,   & \mbox{if } n \mbox{ is odd}.
\end{array}\right.$$
\end{theorem}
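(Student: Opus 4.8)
The plan is to pass to the complement. Set $N=|V(G)|$ and $H=\overline G$. Then $G$ contains no $K_{1,n}$ exactly when $\Delta(G)\le n-1$, i.e.\ when $\delta(H)\ge N-n$; and $H=\overline G$ contains $W_4$ exactly when some vertex $v$ carries a $4$-cycle inside its $H$-neighbourhood $N_H(v)$ (which is precisely the set of $G$-non-neighbours of $v$). So the whole question becomes: what is the largest $N$ admitting a $W_4$-free graph $H$ on $N$ vertices with $\delta(H)\ge N-n$? I claim this threshold is $2n$ for even $n$ and $2n+2$ for odd $n$, whence $R=N+1$ in each case.

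For the lower bounds I would exhibit such extremal $H$. For every $n$, take $H=K_{n,n}$ (so $G=K_n\cup K_n$): being bipartite it is $W_4$-free, it has $\delta(H)=n=N-n$ on $N=2n$ vertices, and $\Delta(G)=n-1$, giving $R\ge 2n+1$. For odd $n$ I would push one step further using parity: start from $K_{n+1,n+1}$ on $N=2n+2$ vertices and add a perfect matching inside each part, which exists precisely because $n+1$ is even. Now every vertex has degree $n+2=N-n$, so $\Delta(G)=n-1$ and $G$ has no $K_{1,n}$; and $H$ stays $W_4$-free because each vertex's neighbourhood induces the join of a single vertex with a matching, which is $C_4$-free. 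This yields $R\ge 2n+3$ for odd $n$, and it is exactly the impossibility of a perfect matching in an odd-sized part that blocks the same gain when $n$ is even.

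For the matching upper bounds, fix $G$ on $N=2n+1$ (even $n$) or $N=2n+3$ (odd $n$) with $\Delta(G)\le n-1$, so $H$ has $\delta(H)\ge N-n$ and hence $e(H)\ge\tfrac12 N(N-n)$. If $H$ were $W_4$-free, every neighbourhood $H[N_H(v)]$ would be $C_4$-free, producing a ceiling on $e(H)$ that must be confronted with this lower bound. The two constructions above meet $e(H)=\tfrac12 N(N-n)$ with equality at $N=2n$ and $N=2n+2$, so the aim is to show that at the next admissible value of $N$ the $W_4$-free ceiling drops strictly below $\tfrac12 N(N-n)$, forcing a $W_4$. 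Concretely I would try to prove that a $W_4$-free $H$ of this density must be bipartite together with a matching inside each side, and then simply count vertices.

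The hard part is the odd case, where the extremal graph is tight and a purely local argument is too lossy: bounding the $4$-cycles in a single neighbourhood via the classical estimate $\mathrm{ex}(t,C_4)=O(t^{3/2})$ only gives $N\le 2n+O(\sqrt n)$, so the $O(\sqrt n)$ slack has to be eliminated by a global argument. I expect to remove it by exploiting that the degree hypothesis is essentially regular, which should force the $C_4$-free neighbourhoods to fit together rigidly and pin $H$ down to the bipartite-plus-matching shape; the parity of $n+1$ then enters exactly as in the constructions to separate the thresholds $N=2n$ and $N=2n+2$.
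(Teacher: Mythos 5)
This statement is quoted by the paper from Surahmat and Baskoro \cite{SurahmatBaskoro}; the paper itself contains no proof of it, so there is nothing internal to compare against. Judged on its own terms, your proposal is half a proof. The reduction to the complement is correct ($G$ is $K_{1,n}$-free iff $\delta(H)\ge N-n$, and $H$ contains $W_4$ iff some $H$-neighbourhood contains a $C_4$), the monotonicity in $N$ that justifies phrasing the problem as ``largest admissible $N$'' does hold (delete a vertex), and both lower-bound constructions are valid: $K_{n,n}$ works for all $n$, and for odd $n$ the graph $K_{n+1,n+1}$ plus a perfect matching in each part is indeed $(n+2)$-regular on $2n+2$ vertices and $W_4$-free, since each neighbourhood is the join of a single vertex with a matching. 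These are exactly the standard extremal colourings.

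The genuine gap is the entire upper bound, and you say so yourself: ``I would try to prove,'' ``I expect to remove it.'' The only quantitative tool you invoke, $\mathrm{ex}(t,C_4)=O(t^{3/2})$ applied neighbourhood by neighbourhood, provably cannot close the argument --- as you note, it leaves an $O(\sqrt{n})$ slack, and the claim needs the threshold pinned down to an additive constant (indeed to within $2$, to separate the even and odd cases). The proposed remedy --- a global rigidity/stability statement that any $W_4$-free graph with $\delta(H)\ge N-n$ and $N\ge 2n+1$ must be ``bipartite plus a matching in each side'' --- is precisely the hard content of the theorem, and no argument for it is given; nor is it explained how such a classification would rule out $N=2n+1$ when $n$ is even (where the parity obstruction you mention lives in the matching, not in the vertex count). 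Until that step is supplied, you have only $2n+1\le R(K_{1,n},W_4)$ for even $n$ and $2n+3\le R(K_{1,n},W_4)$ for odd $n$, not the equalities.
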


Chen et al. \cite{ChenZhangZhang} established $R(K_{1,n},W_6)$, and
Zhang et al. \cite{ZhangChenZhang,ZhangChengChen} established
$R(K_{1,n},W_8)$.

In this note we first give a lower bound on $R(K_{1,n},W_m)$ for
even $m\leq 2n-2$. One can check that when $m=6,8$, the lower bound
on $R(K_{1,n},W_m)$ in Theorem \ref{ThLower} is the exact value, see
\cite{ChenZhangZhang, ZhangChengChen, ZhangChenZhang}.

\begin{theorem}\label{ThLower}
If $6\leq m\leq 2n-2$ and $m$ is even, then
$$R(K_{1,n},W_m)\geq\left\{\begin{array}{ll}
2n+m/2-1,  & \mbox{if both } n \mbox{ and } m/2 \mbox{ are even};\\
2n+m/2,    & \mbox{otherwise.}
\end{array}\right.$$
\end{theorem}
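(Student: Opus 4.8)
The plan is to prove the lower bound in the usual Ramsey way: for each case I will exhibit a single graph $G$ on $N$ vertices, where $N$ is one less than the claimed value, such that $G$ contains no $K_{1,n}$ and $\overline{G}$ contains no $W_m$. Since $K_{1,n}$ appears in $G$ exactly when some vertex has degree at least $n$, the first requirement is simply $\Delta(G)\le n-1$. The construction I propose is $G:=K_n\cup \overline{F}$ (disjoint union), so that $\overline{G}=\overline{K_n}\vee F$, the join of an independent set of size $n$ with a graph $F$ on $f:=N-n$ vertices. Here I take $F$ to be an $(m/2-1)$-regular graph all of whose components have fewer than $m$ vertices. The two cases of the theorem correspond to $N=2n+m/2-1$ (so $f=n+m/2-1$) and $N=2n+m/2-2$ (so $f=n+m/2-2$).

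First I would check the degree condition. As $F$ is $(m/2-1)$-regular, $\overline{F}$ is $(f-m/2)$-regular, so $\Delta(G)=\max\{n-1,\ f-m/2\}$; substituting $f$ gives $f-m/2=n-1$ in the first case and $n-2$ in the second, whence $\Delta(G)=n-1\le n-1$ in both. Thus $G$ is $K_{1,n}$-free and has the right order. The heart of the argument is then to verify that $\overline{G}=\overline{K_n}\vee F$ is $W_m$-free, for which I would use the observation that $\overline{G}$ contains $W_m$ iff some vertex has a neighbourhood inducing a copy of $C_m$. There are two kinds of vertices. A vertex of the independent part $\overline{K_n}$ has neighbourhood inducing exactly $F$, which has no $C_m$ because each component has fewer than $m$ vertices. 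A vertex $u\in V(F)$ has neighbourhood inducing $\overline{K_n}\vee F[N_F(u)]$, where $|N_F(u)|=m/2-1$. In any join $\overline{K_n}\vee H$ the independent side meets a cycle in an independent set of that cycle, so a cycle of length $\ell$ uses at most $\ell/2$ vertices outside $H$ and hence satisfies $\ell\le 2|V(H)|$; here this gives every cycle length at most $2(m/2-1)=m-2<m$. So no neighbourhood contains $C_m$, and $\overline{G}$ is $W_m$-free.

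The remaining and, I expect, the genuinely delicate step is the existence of the graph $F$: an $(m/2-1)$-regular, $C_m$-free graph of the prescribed order $f$. I would build it as a disjoint union of regular components each of order in $[m/2,\,m-1]$ (so each has circumference below $m$ and thus no $C_m$), using a greedy partition of $f$ into such block sizes together with the standard fact that an $r$-regular graph on $p$ vertices exists whenever $p\ge r+1$ and $rp$ is even. The parity bookkeeping here is exactly what produces the split into the two cases: writing $r=m/2-1$, the quantity $rf$ must be even. When $n$ and $m/2$ are both even, $r$ is odd and $n+m/2-1$ is odd, so an $(m/2-1)$-regular graph of order $n+m/2-1$ cannot exist; one is forced down to $f=n+m/2-2$, which explains the loss of one vertex and the value $2n+m/2-1$. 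In every other case $r(n+m/2-1)$ is even, the regular graph of order $n+m/2-1$ exists, and the value $2n+m/2$ is attained. The main obstacle is therefore not the wheel-freeness, which follows cleanly from the longest-cycle-in-a-join estimate, but the careful verification that the required regular $C_m$-free graph of each order really exists, including the per-component parity conditions when $m/2$ is even; I would isolate this as a short preliminary lemma before assembling $G$.
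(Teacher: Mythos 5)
Your construction is exactly the paper's: $K_n$ disjoint union with the complement of an $(m/2-1)$-regular graph whose components all have order less than $m$, with the same degree check, the same join/independent-set argument ruling out $W_m$ in the complement, and the same parity analysis explaining the two cases. The existence statement you defer to a preliminary lemma is precisely the paper's Lemma \ref{LeRegular}, proved there by the same decomposition into regular components of order between $k+1$ and $2k+1$.
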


Moreover, we establish the exact values when $n+2\leq m\leq 2n-2$.
We will show that the lower bound in Theorem \ref{ThLower} is the
exact value if $m\geq n+2$.

\begin{theorem}\label{ThExactly}
If $n+2\leq m\leq 2n-2$ and $m$ is even, then
$$R(K_{1,n},W_m)=\left\{\begin{array}{ll}
2n+m/2-1,  & \mbox{if both } n \mbox{ and } m/2 \mbox{ are even};\\
2n+m/2,    & \mbox{otherwise.}
\end{array}\right.$$
\end{theorem}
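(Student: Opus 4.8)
The plan is to prove the two directions separately. The lower bound $R(K_{1,n},W_m)\geq 2n+m/2-1$ (resp. $2n+m/2$) is already provided by Theorem~\ref{ThLower}, so the entire work lies in establishing the matching upper bound: I must show that if $G$ is a graph on $r=2n+m/2-1$ (resp. $2n+m/2$) vertices, then either $G\supseteq K_{1,n}$ or $\overline{G}\supseteq W_m$. I would argue by contradiction: assume $G$ contains no $K_{1,n}$, which means $\Delta(G)\leq n-1$, equivalently every vertex has degree at least $r-1-(n-1)=r-n$ in $\overline{G}$, so $\delta(\overline{G})\geq r-n$. Setting $H=\overline{G}$ on $r$ vertices, the task becomes purely structural: \emph{show that a graph $H$ on $r$ vertices with $\delta(H)\geq r-n$ must contain $W_m$}. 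Since $r-n$ is roughly $n+m/2-1$ and $r$ is roughly $2n+m/2$, the minimum degree of $H$ is large (a bit above half of $r$), so $H$ is a dense graph and $W_m$ should be forced by Hamiltonian/pancyclic-type machinery.

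\medskip
The core is to locate a wheel $W_m=K_1+C_m$ inside $H$. My plan is to first find a vertex $v$ (the hub) together with a large set of neighbors, and then find a cycle $C_m$ within $N_H(v)$. Concretely, I would pick the hub $v$ so that its neighborhood $N=N_H(v)$ is as structured as possible, and study the subgraph $H[N]$. Because $\delta(H)\geq r-n$, every vertex of $H$ (in particular every vertex of $N$) misses at most $n-1$ vertices, so inside $N$ the induced minimum degree is at least $|N|-1-(n-1)=|N|-n$. The decisive inequality to track is $|N|=\delta(H)\geq r-n$ versus the length $m$: I need $|N|\geq m$ just to have room for the rim cycle, and then I need $H[N]$ to contain a cycle of the exact length $m$. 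The natural tool is a \emph{pancyclicity} or \emph{cycle-length spectrum} result (e.g. Bondy's theorem that a graph on $k$ vertices with minimum degree $\geq k/2$ is pancyclic unless it is $K_{k/2,k/2}$): applying such a result to a suitable $m$-vertex subset of $N$, or to $H[N]$ directly, should yield a $C_m$ in the neighborhood of $v$, completing the wheel. The parity split in the statement (whether $n$ and $m/2$ are even) is exactly the kind of condition that governs whether one falls into the exceptional balanced-complete-bipartite case of a pancyclicity theorem, which is why the bound drops by one in the all-even case.

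\medskip
The main obstacle, as I see it, is the tightness of the counting: the minimum-degree bound $\delta(H)\geq r-n$ gives only a slim margin over $m$, so a crude application of a pancyclicity theorem to all of $H[N]$ is unlikely to work directly — I will almost certainly need to choose the hub $v$ carefully and then restrict to a cleverly selected $m$-subset $S\subseteq N$ on which the induced minimum degree is provably at least $|S|/2$, handling the extremal bipartite configurations by hand. The parity bookkeeping is where the real difficulty concentrates: in the borderline all-even case $r=2n+m/2-1$ the extremal graphs are essentially unique, so I would expect the proof to isolate those near-extremal graphs and show each of them still contains $W_m$ (or is excluded because it would force a $K_{1,n}$ in $G$). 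I would organize the write-up as a single upper-bound proof with a short case analysis on the parities of $n$ and $m/2$, invoking Theorem~\ref{ThLower} for the lower bound and devoting the bulk of the argument to the neighborhood-and-cycle construction above.
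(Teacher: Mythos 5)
Your reduction to showing that every graph $H$ on $r$ vertices with $\delta(H)\ge r-n$ contains a $W_m$ is exactly the paper's starting point (the paper phrases it as $\delta(G)\ge n+m/2-\theta$ for the complement), and the hub-plus-rim strategy is the right one. But the quantitative core of your plan does not work. For any hub $v$, the neighbourhood $N=N(v)$ has $|N|\ge n+m/2-\theta$ vertices while the induced minimum degree of $H[N]$ is only about $|N|-n\approx m/2$; having $m/2\ge |N|/2$ would require $m\ge 2n$, which is excluded by the hypothesis $m\le 2n-2$. So a Bondy-type pancyclicity theorem with threshold $\nu/2$ never applies, and there is no evident way to pass to an $m$-subset $S\subseteq N$ keeping induced minimum degree $|S|/2$. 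The paper's route in the ``good'' case is different: Dirac's theorem gives circumference at least $\min\{2\delta,\nu\}\ge m$, and the Brandt--Faudree--Goddard weakly-pancyclic theorem, whose threshold is $(\nu+2)/3$ rather than $\nu/2$, is satisfied precisely because $m\ge n+2$ (this is where that hypothesis enters); the bipartite exceptional case is handled by Jackson's cycle theorem rather than as a $K_{k/2,k/2}$ extremal configuration.

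More seriously, both Dirac's theorem and this whole route require $G[N(u)]$ to be $2$-connected, and nothing in the degree conditions forces that; the bulk of the paper's proof is the separable case. There the rim cycle cannot be found inside the neighbourhood of the maximum-degree vertex $u$ at all: the paper takes a smallest component or end-block $D$ of $G[N(u)]$, moves the hub to a vertex $v\in D$, and builds the $C_m$ by alternating between $N_D(v)$ and the exterior set $I=V(G)\setminus(\{u\}\cup N(u))$, using a common-neighbourhood estimate in $I$, Jackson's theorem once more, and a final three-case analysis in which the parity constant $\theta$ is pinned down by a handshake-type argument. Your proposal's instruction to ``choose the hub carefully'' supplies no mechanism for this; indeed, in a minimal counterexample every vertex of $N(u)$ has the same degree, so there is no distinguished better hub. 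The missing separable case is where essentially all of the work of the theorem lies, so the proposal as written has a genuine gap.
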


\section{Preliminaries}

We denote by $\nu(G)$ the order of $G$, by $\delta(G)$ the minimum
degree of $G$, $c(G)$ the circumference of $G$, and $g(G)$ the girth of
$G$, respectively. The graph $G$ is said to be \emph{pancyclic} if $G$ contains
cycles of every length between 3 and $\nu(G)$, and \emph{weakly
pancyclic} if $G$ contains cycles of every length between $g(G)$ and
$c(G)$.

We will use the following results.

\begin{theorem}[Dirac \cite{Dirac}]\label{ThDi}
Every 2-connected graph $G$ has circumference
$c(G)\geq\min\{2\delta(G),\nu(G)\}$.
\end{theorem}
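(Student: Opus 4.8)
\section*{Proof proposal for Theorem~\ref{ThDi}}

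The plan is to argue with a \emph{longest} cycle and exploit the one structural fact a longest cycle must satisfy: one cannot reroute a portion of it through a longer detour. Let $C$ be a longest cycle of the $2$-connected graph $G$ and write $c=c(G)=|V(C)|$. If $C$ is Hamiltonian then $c=\nu(G)\geq\min\{2\delta(G),\nu(G)\}$ and we are done, so I assume $c<\nu(G)$; it then suffices to prove $c\geq 2\delta(G)$. Fix a component $H$ of $G-V(C)$. Since $G$ is $2$-connected it has no cut vertex, so $H$ must have neighbours at (at least) two distinct vertices of $C$; this is where connectivity enters. I orient $C$ and write $u^{+}$ for the successor of a vertex $u\in V(C)$.

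The engine of the proof is an \emph{insertion lemma}. Because $C$ is longest, no path off $C$ may be substituted for a shorter arc of $C$. Two instances suffice. First, a vertex of $H$ cannot be adjacent to two consecutive vertices of $C$, for otherwise inserting it between them lengthens $C$. Second, take a longest path $P=h_0h_1\cdots h_s$ in $G[H]$ and set $A=N(h_0)\cap V(C)$ and $B=N(h_s)\cap V(C)$; then for $a\in A$, $b\in B$ with $a\neq b$ the path $a\,h_0h_1\cdots h_s\,b$ has length $s+2$ and its interior avoids $C$, so replacing either arc of $C$ between $a$ and $b$ by this path must not lengthen $C$; hence \emph{both} arcs of $C$ between $a$ and $b$ have length at least $s+2$. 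A further instance gives $A^{+}\cap B=\varnothing$.

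Now the main count. Choose $P=h_0\cdots h_s$ to be a longest path of $G[H]$ whose endpoint $h_0$ is adjacent to $C$ (such a path exists, since some vertex of $H$ has a neighbour on $C$), so that $A\neq\varnothing$; assume in this case that $B\neq\varnothing$ as well. Since $P$ is longest in $G[H]$, every neighbour of $h_0$ inside $H$ lies on $P$, whence $\deg(h_0)=|A|+|N(h_0)\cap V(H)|\leq |A|+s$. List $A$ cyclically along $C$. By the first insertion fact the $|A|$ consecutive gaps of $A$ each have length at least $2$; and by the second, the gap that contains a chosen vertex $b\in B$ splits into two sub-arcs from $b$ to its two neighbouring $A$-vertices, each of length at least $s+2$, so that gap has length at least $2s+4$. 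Summing the gaps gives $c\geq (2s+4)+2(|A|-1)=2|A|+2s+2>2(|A|+s)\geq 2\deg(h_0)\geq 2\delta(G)$, the desired bound. (A small amount of care is needed when $A\cap B\neq\varnothing$, handled by choosing $b\in B\setminus A$.)

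The main obstacle is the degenerate case excluded above, in which \emph{every} longest path of $G[H]$ has an endpoint with no neighbour on $C$. Then that endpoint has all of its $\geq\delta(G)$ neighbours inside $H$ on the path, forcing $s\geq\delta(G)$; thus $G[H]$ carries a path of length at least $\delta(G)$ with one end still attached to $C$. I would resolve this by invoking $2$-connectivity again: two vertex-disjoint $H$--$C$ connections let me splice this long internal path together with one arc of $C$ into a single cycle. Under the standing assumption $c<2\delta(G)$ one has $\delta(G)>c/2$, so the \emph{longer} of the two arcs of $C$ (of length at least $c/2$) together with an internal segment of length at least $\delta(G)$ would produce a cycle strictly longer than $C$, a contradiction. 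Making the arc selection and the disjointness of the two splicing paths precise, so that the spliced internal segment really has length on the order of $\delta(G)$, is the delicate technical point; everything else reduces to the insertion lemma and the elementary gap count above.
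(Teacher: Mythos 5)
You should note first that the paper does not prove Theorem~\ref{ThDi} at all: it is quoted as a classical result of Dirac, so your attempt can only be compared with the standard proofs in the literature. Judged that way, your main case is essentially the classical argument and is sound: with a longest path $P=h_0\cdots h_s$ of $G[H]$ whose two ends both have neighbours on $C$, the two insertion facts plus the gap count $c\geq(2s+4)+2(|A|-1)$ give $c>2d(h_0)\geq 2\delta(G)$, and the overlap issue is mostly harmless (if the chosen $b$ lies in $A$ and $|A|\geq 2$, the count $c\geq 2(s+2)+2(|A|-2)=2|A|+2s\geq 2d(h_0)$ still works; though note that when $A=B$ is a single vertex the count collapses entirely, another corner you do not cover).

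The genuine gap is the degenerate case, and it cannot be waved away, because it really occurs: take $H=K_{\delta,\delta+1}$ joined to $C$ by edges from two vertices of its smaller side. Then $G$ can be $2$-connected with $\delta(G)=\delta$, yet every longest path of $G[H]$ ends in two vertices of the larger side, neither of which has a neighbour on $C$. Your proposed repair --- two vertex-disjoint $H$--$C$ connections splicing the long internal path to an arc of $C$ --- fails as stated: $2$-connectivity (Menger) gives two disjoint paths from $H$ to $C$, but gives no control over \emph{where} they meet your internal path of length $\geq\delta$. They may attach at two nearby vertices of it, in which case the spliced cycle contains an arbitrarily short piece of that path and need not be longer than $C$. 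What you actually need is a path of length about $\delta$ \emph{between two prescribed attachment vertices} inside $H$, and that is itself a nontrivial Dirac-type lemma, of essentially the same depth as the theorem being proved. The standard remedy is to change the extremal object: take a longest path $Q=x_0x_1\cdots x_t$ with $x_0\in V(C)$ and all other vertices off $C$. Maximality of $Q$ then forces every neighbour of $x_t$ to lie on $Q$ or on $C$, so your degenerate case cannot arise, and the same gap count gives $c\geq 2t+2|N_C(x_t)|-2\geq 2\delta(G)$ whenever $x_t$ has a neighbour on $C$ other than $x_0$; the one residual subcase $N_C(x_t)\subseteq\{x_0\}$ is then handled by rotating $Q$ (replace it by $x_0\cdots x_ix_tx_{t-1}\cdots x_{i+1}$ for a neighbour $x_i$ of $x_t$) rather than by splicing. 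As submitted, your argument proves the theorem only under the extra hypothesis that some longest path of $G[H]$ has both ends attached to $C$, so it is incomplete.
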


\begin{theorem}[Brandt et al. \cite{BrandtFaudreeGoddard}]\label{ThBrFaGo}
Every non-bipartite graph $G$ with $\delta(G)\geq(\nu(G)+2)/3$ is
weakly pancyclic and has girth 3 or 4.
\end{theorem}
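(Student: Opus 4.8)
The plan is to separate the statement into two independent assertions: (i) $g(G)\in\{3,4\}$, and (ii) $G$ contains a cycle of every length between $g(G)$ and $c(G)$. Throughout write $n=\nu(G)$ and $\delta=\delta(G)\ge(n+2)/3$, and recall that a non-bipartite graph contains an odd cycle, so $g(G)$ is finite and $g(G)\ge 3$. For (i) I would argue by contradiction, assuming $g(G)\ge 5$, and count vertices around a single vertex. Fix $u$ with neighbours $v_1,\dots,v_d$, where $d\ge\delta$. Since there is no triangle, $N(u)$ is independent, so each $N(v_i)\setminus\{u\}$ avoids $N(u)$; since there is no $4$-cycle, the sets $N(v_i)\setminus\{u\}$ are pairwise disjoint. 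Each has size at least $\delta-1$, so $n\ge 1+d+d(\delta-1)=1+d\delta\ge 1+\delta^2$. Substituting $\delta\ge(n+2)/3$ yields $9n\ge 9+(n+2)^2$, i.e. $n^2-5n+13\le 0$, which is impossible since the discriminant $25-52$ is negative. Hence $g(G)\le 4$, and combined with $g(G)\ge 3$ this gives $g(G)\in\{3,4\}$.

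For (ii) I would first reduce to the case where the cycle structure lives inside a $2$-connected subgraph: every cycle of $G$ lies in a block, so it suffices to control the cycle spectra of the blocks, and I would check that the degree hypothesis forces a block of large order carrying the long cycles and that the minimum-degree bound essentially transfers to it. This bookkeeping is necessary because the hypothesis $\delta\ge(n+2)/3$ does \emph{not} force $2$-connectivity outright. Granting $2$-connectivity, Theorem \ref{ThDi} gives $c(G)\ge\min\{2\delta,n\}$, so the circumference is already large, and the remaining task is purely to exclude gaps in the cycle spectrum between $g(G)$ and $c(G)$.

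The heart of the proof is a one-step extension/contraction argument. Suppose (ii) fails and let $s$ be the smallest integer with $g(G)\le s\le c(G)$ for which $G$ has no cycle of length $s$; then $g(G)<s<c(G)$, the graph contains a cycle $C$ of length $s-1$, and (since $s\le n-1$) at least one vertex lies off $C$. I would try to produce a cycle of length exactly $s$ by rerouting $C$: if some vertex $w\notin V(C)$ is adjacent to two consecutive vertices $x,y$ of $C$, then replacing the edge $xy$ by the path $xwy$ yields a cycle of length $(s-1)-1+2=s$, a contradiction. Thus I may assume no off-cycle vertex has two consecutive neighbours on $C$; under the girth bound from (i) and the density $\delta\ge(n+2)/3$, I would show that this forces the neighbours of off-cycle vertices into a near-bipartite pattern that either directly produces a $C_s$ via a slightly longer detour together with a compensating chord, or else contradicts non-bipartiteness. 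A symmetric argument starting from a longest cycle (length $c(G)$) and contracting by one handles the lengths just below $c(G)$, so that the two ranges meet.

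The main obstacle is exactly this single-step argument: increasing or decreasing a cycle length by precisely one is the classic difficulty in pancyclicity proofs, since generic rerouting changes the length by two. The delicate points are (a) ruling out the extremal, almost-bipartite configurations in which the neighbours of outside vertices interleave perfectly with the cycle, which is where the precise constant $(n+2)/3$ and the non-bipartite hypothesis are genuinely used, and (b) the connectivity bookkeeping required before Theorem \ref{ThDi} can be invoked. I expect the bulk of the work, and essentially all of the case analysis, to concentrate on (a).
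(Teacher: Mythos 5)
The statement you were asked to prove is not proved in the paper at all: it is quoted as a black-box citation of Brandt, Faudree and Goddard \cite{BrandtFaudreeGoddard}, so there is no in-paper argument to compare against; your proposal can only be judged on its own merits and against the original (long) proof in that reference. Your part (i) is correct and complete: under girth at least $5$, the neighbourhoods $N(v_i)\setminus\{u\}$ of the neighbours of a fixed vertex $u$ are pairwise disjoint and disjoint from $\{u\}\cup N(u)$, giving $\nu(G)\geq 1+d(u)\delta(G)\geq 1+\delta(G)^2$, and substituting $\delta(G)\geq(\nu(G)+2)/3$ yields $n^2-5n+13\leq 0$, which has negative discriminant. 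That is a clean, valid proof that the girth is $3$ or $4$.

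Part (ii), however, is a plan rather than a proof, and the gap sits exactly where you yourself locate it. Two steps are asserted but not carried out. First, the block reduction: you say the degree bound ``essentially transfers'' to a large block, but $\delta(G)\geq(\nu(G)+2)/3$ does not translate into the corresponding bound relative to the block's own order without an argument, and even granting weak pancyclicity of each block, the cycle spectrum of $G$ is the union of the blocks' spectra, which could in principle have gaps between what different blocks supply; this needs to be ruled out. Second, and more seriously, the crux of the theorem --- converting a cycle of length $s-1$ into one of length $s$ when no outside vertex has two consecutive neighbours on the cycle --- is dispatched with ``I would show that this forces a near-bipartite pattern that either produces a $C_s$ or contradicts non-bipartiteness.'' That sentence \emph{is} the theorem: ruling out the extremal interleaved (near-bipartite) configurations is precisely where the constant $(n+2)/3$ and the non-bipartiteness hypothesis do their work, and in the original paper this occupies several lemmas and an extensive case analysis building on earlier results about cycle spectra of dense graphs, not a single rerouting step. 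The companion claim that ``a symmetric argument contracting by one'' handles lengths just below the circumference is likewise not a routine symmetric operation. As it stands you have proved the girth assertion but not weak pancyclicity; the proposal correctly isolates the hard point but supplies no idea for crossing it.
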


\begin{theorem}[Jackson \cite{Jackson}]\label{ThJa}
Let $G$ be a bipartite graph with partition sets $X$ and $Y$,
$2\leq|X|\leq|Y|$. If for every vertex $x\in X$,
$d(x)\geq\max\{|X|,|Y|/2+1\}$, then $G$ has a cycle containing all
vertices in $X$, (i.e., of length $2|X|$).
\end{theorem}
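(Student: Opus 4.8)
The plan is to argue by extremality. Write $s=|X|$ and $t=|Y|$, so $2\le s\le t$ and every $x\in X$ satisfies $d(x)\ge\max\{s,\,t/2+1\}$; the target is a cycle of length $2s$, i.e.\ one meeting $X$ in all $s$ of its vertices. Since $G$ is bipartite, every cycle alternates between $X$ and $Y$ and has length exactly twice the number of its $X$-vertices, so a cycle of length $2s$ is precisely a cycle covering $X$.

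First I would record two elementary consequences of the degree hypothesis that drive everything. (i) Any two vertices $x,x'\in X$ have at least two common neighbours: by inclusion--exclusion $|N(x)\cap N(x')|\ge d(x)+d(x')-t\ge 2(t/2+1)-t=2$. In particular $G$ contains a $4$-cycle, so the family of cycles meeting $X$ in at least two vertices is nonempty. (ii) If $C$ is a cycle missing some vertex of $X$ and $x_0\in X\setminus V(C)$, then $x_0$ has a neighbour in $Y\setminus V(C)$: such a $C$ uses exactly $|V(C)\cap X|\le s-1$ vertices of $Y$, which is fewer than $d(x_0)\ge s$, so some neighbour of $x_0$ avoids $C$. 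Note that (ii) is exactly where the bound $d(x)\ge s$, rather than only $d(x)\ge t/2+1$, is needed.

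Now suppose, for contradiction, that no cycle covers $X$, and choose a cycle $C$ maximising $k:=|V(C)\cap X|$; by (i) we have $k\ge 2$, and by assumption $k<s$. Write $C=x_1w_1x_2w_2\cdots x_kw_kx_1$ with $x_i\in X$, $w_i\in Y$, and fix $x_0\in X\setminus V(C)$. The goal is to produce a cycle meeting $X$ in $k+1$ vertices, contradicting maximality; since such a cycle needs $k+1$ vertices of $Y$, the construction must bring in at least one $Y$-vertex off $C$, which by (ii) is available as a neighbour of $x_0$. The mechanism I would use is a rotation/insertion argument: starting from an off-cycle neighbour of $x_0$, build a longest alternating path reaching back to $C$ and then reroute $C$ through $x_0$. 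The overlap bound (i), applied to the high-degree $X$-vertices met along the way, forces the repeated neighbourhood intersections (P\'osa/Moon--Moser style ``crossing chords'') that make a valid reroute unavoidable: whenever $x_0$ attaches to a $Y$-vertex adjacent to some $x_i$ on $C$, the abundance of common neighbours of $x_i,x_{i+1}$ with the off-cycle $X$-vertices supplies the extra distinct $Y$-vertex needed to splice $x_0$ in, yielding a cycle of length $2k+2$.

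The main obstacle is that the hypotheses bound degrees only on the $X$-side; the vertices of $Y$ may have arbitrarily small degree, so a naive greedy insertion can stall on an off-cycle $Y$-vertex that fails to reach back to $C$. The heart of the proof is therefore to run the entire rotation/crossing analysis from the $X$-side, using $d(x)\ge t/2+1$ to guarantee that neighbourhoods of distinct $X$-vertices overlap enough to close up, and $d(x)\ge s$ to guarantee a fresh supply of off-cycle $Y$-vertices at each step. Carefully organising the resulting case distinctions---according to whether the relevant neighbours of $x_0$ lie on or off $C$, and whether the attachment points are consecutive on $C$---so that some case always produces the length-$(2k+2)$ cycle is where the real work lies; the contradiction with the maximality of $k$ then finishes the proof.
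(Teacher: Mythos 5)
This statement is Jackson's theorem, which the paper only cites (Theorem \ref{ThJa}, from \cite{Jackson}) and does not prove, so there is no in-paper argument to compare against; your proposal has to stand on its own as a proof, and it does not. Your two preliminary observations are correct but cheap: (i) for $x,x'\in X$, inclusion--exclusion in $Y$ gives $|N(x)\cap N(x')|\geq d(x)+d(x')-t\geq 2$, so a cycle meeting $X$ in two vertices exists; (ii) a cycle missing $x_0\in X$ covers at most $s-1$ vertices of $Y$, so $d(x_0)\geq s$ leaves $x_0$ a neighbour off the cycle. Everything after that --- the derivation of a contradiction from the maximality of $k=|V(C)\cap X|$ --- is asserted rather than proved, and it is exactly the content of the theorem.

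Concretely, the sentence claiming that ``the abundance of common neighbours of $x_i,x_{i+1}$ with the off-cycle $X$-vertices supplies the extra distinct $Y$-vertex needed to splice $x_0$ in'' does not follow from (i) and (ii). To enlarge $C$ you need a configuration such as: a vertex $y\in Y\setminus V(C)$ adjacent to both $x_0$ and some $x_i\in V(C)$, with $x_0$ also adjacent to a $Y$-vertex consecutive to $x_i$ on $C$ (so that the edge $x_iw_i$ can be replaced by the path $x_iyx_0w_i$), or a longer rerouting of the same kind. Nothing in your sketch shows such a configuration must exist: the two common neighbours guaranteed by (i) may both already lie on $C$, and in useless positions; and the off-cycle neighbour of $x_0$ guaranteed by (ii) may have degree $1$ (the hypotheses place no lower bound whatsoever on degrees in $Y$), in which case it lies on no cycle at all. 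Replacing a single $x_{i+1}$ by $x_0$ between two consecutive cycle-neighbours of $x_0$ gains nothing, so the whole burden is on producing the off-cycle $Y$-vertex in the right place, and that requires a genuine rotation/counting argument using both bounds $d(x)\geq s$ and $d(x)\geq t/2+1$ simultaneously. You explicitly defer this case analysis as ``where the real work lies''; that deferred work \emph{is} Jackson's proof, which is a substantial argument in its own right. As it stands, the proposal is a sensible plan with its central step missing, not a proof.
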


A graph $G$ is said to be \emph{$k$-regular} if every vertex of $G$
has degree $k$.

\begin{lemma}\label{LeRegular}
Let $k$ and $n$ be two integers with $n\geq k+1$ and $k$ or
$n$ is even. Then there is a $k$-regular graph of order $n$ each
component of which is of order at most $2k+1$.
\end{lemma}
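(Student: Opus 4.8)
The plan is to build the desired graph as a disjoint union of small connected $k$-regular graphs, each of order in the range $[k+1,2k+1]$, obtained by partitioning the $n$ vertices into blocks of admissible sizes. The hypothesis that $k$ or $n$ is even guarantees that $kn$ is even, the standard parity condition for the existence of a $k$-regular graph; when $k$ is odd it forces $n$ to be even, a fact I will use crucially.

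First I would assemble the building blocks via circulant graphs on $\mathbb{Z}_m$. If $k=2r$ is even, the circulant with connection set $\{\pm 1,\ldots,\pm r\}$ is a connected $k$-regular graph of order $m$ for every $m$ with $k+1\le m\le 2k+1$ (indeed for every $m\ge k+1$); for $m=k+1$ this is just $K_{k+1}$. If $k=2r+1$ is odd and $m$ is even, I add to the circulant $\{\pm 1,\ldots,\pm r\}$ the perfect matching $i\sim i+m/2$, obtaining a connected $k$-regular graph of order $m$ for every even $m$ with $k+1\le m\le 2k$; here $m/2>r$ (since $m\ge k+1$) guarantees the matching is edge-disjoint from the circulant, and the edges $\pm 1$ yield a Hamilton cycle, so every block is connected of order at most $2k+1$.

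Second, I would show that $n$ can be partitioned into admissible block sizes. When $k$ is even the admissible sizes form the full interval $[k+1,2k+1]$, so the set of sums of $j$ blocks is the interval $[j(k+1),j(2k+1)]$; a short computation shows $(j+1)(k+1)\le j(2k+1)+1$ for all $j\ge 1$, so consecutive intervals are contiguous and their union is $[k+1,\infty)$, which contains $n$. When $k$ is odd the admissible sizes are the even integers in $[k+1,2k]$; halving, this amounts to writing the integer $n/2\ge (k+1)/2$ as a sum of elements of the full interval $[(k+1)/2,k]$, and the same contiguity computation, now with $a=(k+1)/2$, $b=k$, and $b-a=a-1=(k-1)/2$, again covers all integers $\ge (k+1)/2$.

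Finally, taking the disjoint union of the connected $k$-regular blocks furnished by the chosen partition gives a $k$-regular graph of order $n$ whose components each have order at most $2k+1$. The main obstacle is the odd-$k$ case: the block sizes are restricted to even values, shrinking the ratio of the largest to the smallest admissible size below $2$, so it is not a priori clear that every even $n$ is representable; the point that makes it work is the exact equality $b-a=a-1$, which forces the $j$-block and $(j+1)$-block intervals to just touch for every $j\ge 1$. The degenerate case $k=1$ (a perfect matching, components $K_2$ of order $2\le 2k+1$) is handled separately and trivially.
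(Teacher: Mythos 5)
Your proof is correct and takes essentially the same approach as the paper: both decompose $n$ into block sizes lying in $[k+1,2k+1]$ (even sizes when $k$ is odd) and realize each block by a circulant-type $k$-regular graph, then take the disjoint union. The only differences are cosmetic: you build the odd-$k$ blocks directly as a circulant plus the antipodal perfect matching where the paper takes the complement of an $(n-1-k)$-regular circulant, and you justify the partition of $n$ by an interval-contiguity argument where the paper does an explicit division-with-remainder case analysis.
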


\begin{proof}
We first assume that $k+1\leq n\leq 2k+1$. If $k$ is even, then let
$G$ be the graph with vertex set $\{v_1,v_2,\ldots,v_n\}$ and every
vertex $v_i$ is adjacent to the $k$ vertices in $\{v_{i\pm
1},v_{i\pm 2},\ldots,v_{i\pm k/2}\}$, where the subscripts are taken
modulo $n$. Then $G$ is a $k$-regular graph of order $n$. If $k$ is
odd, then $n$ is even and $n-1-k$ is even. Similarly as above we can
get a $(n-1-k)$-regular graph $H$ of order $n$. Then
$G=\overline{H}$ is a $k$-regular graph of order $n$. Since $n\leq
2k+1$, every component of $G$ has order at most $2k+1$.

Now we assume that $n\geq 2k+2$.

If $k$ is even, then let
$$n=q(2k+1)+r,\ 0\leq r\leq 2k.$$
Note that $q\geq 1$. If $r=0$, then the union of $q$ copies of a
$k$-regular graph of order $2k+1$ is a required graph. If $k+1\leq
r\leq 2k$, then the union of $q$ copies of a $k$-regular graph of
order $2k+1$ and one copy of a $k$-regular graph of order $r$ is a
required graph. Now we assume that $1\leq r\leq k$. Note that
$k+1\leq k+r\leq 2k$. Then the union of $q-1$ copies of a
$k$-regular graph of order $2k+1$, one copy of a $k$-regular graph
of order $k+1$, and one copy of a $k$-regular graph of order $k+r$,
is a required graph.

If $k$ is odd, then $n$ is even. Let
$$n=2qk+r,\ 0\leq r<2k.$$
Clearly $r$ is even. If $r=0$ then the union of $q$ copies of a
$k$-regular graph of order $2k$ is a required graph. If $k+1\leq
r<2k$, then the union of $q$ copies of a $k$-regular graph of order
$2k$ and one copy of a $k$-regular graph of order $r$ is a required
graph. Now we assume that $2\leq r\leq k-1$. Note that $k+1\leq
k+r-1\leq 2k$. Then the union of $q-1$ copies of a $k$-regular graph
of order $2k$, one copy of a $k$-regular graph of order $k+1$, and
one copy of a $k$-regular graph of order $k+r-1$, is a required
graph.
\end{proof}

\section{Proof of Theorem \ref{ThLower}}

For convenience we define a constant $\theta$ such that $\theta=1$ if
both $n$ and $m/2$ are even, and $\theta=0$ otherwise. We will
construct a graph $G$ of order $2n+m/2-\theta-1$ such that $G$
contains no $K_{1,n}$ and $\overline{G}$ contains no $W_m$.

It is easy to check that $m/2-1$ or $n+m/2-\theta-1$ is even.
By Lemma \ref{LeRegular}, Let $H$ be an $(m/2-1)$-regular graph of
order $n+m/2-\theta-1$ such that each component of which has order
at most $m-1$. Let $G=\overline{H}\cup K_n$. Then
$\nu(G)=2n+m/2-\theta-1$.

We first show that $G$ contains no $K_{1,n}$. Clearly $K_n$ contains
no $K_{1,n}$. Note that every vertex in $H$ has degree $m/2-1$, and
then every vertex in $\overline{H}$ has degree
$\nu(H)-1-m/2+1=n-\theta-1$. Thus $\overline{H}$ contains no
$K_{1,n}$.

Second we show that $\overline{G}$ contains no $W_m$. Suppose to
contrary that $\overline{G}$ contains a $W_m$. Let $x$ be the hub of
the $W_m$. If $x$ is contained in $K_n$, then all vertices of the
wheel other than $x$ are in $V(H)$. This implies that $H$ has a cycle
$C_m$. But every component of $H$ has order less than $m$, a
contradiction. So we assume that $x\in V(H)$. Note that $x$ has
$m/2-1$ neighbors in $H$. At least $m/2+1$ vertices of the wheel are
in the $K_n$. This implies that there are two vertices in the $K_n$
such that they are adjacent in $\overline{G}$, a contradiction.

This implies that $R(K_{1,n},W_m)\geq 2n+m/2-\theta$.

%------------------------------------------
\section{Proof of Theorem \ref{ThExactly}}
%------------------------------------------

Note that by our assumption $n\geq 4$ and $m\geq 6$. We already
showed $R(K_{1,n},W_m)\geq 2n+m/2-\theta$ in Theorem \ref{ThLower}.
Now we prove that $R(K_{1,n},W_m)\leq 2n+m/2-\theta$. Let $G$ be a
graph of order
$$\nu(G)=2n+m/2-\theta.$$ Suppose that $\overline{G}$ has no
$K_{1,n}$, i.e.,
\begin{align}\label{EqdeltaG}
\delta(G)\geq n+m/2-\theta.
\end{align}
We will prove that $G$ has a $W_m$. We assume to the contrary that
$G$ contains no $W_m$. We choose such a $G$ with minimum size.

Let $u$ be a vertex of $G$ with maximum degree. Set
$$H=G[N(u)] \mbox{ and } I=V(G)\backslash(\{u\}\cup N(u)).$$
Note that $\nu(H)=d(u)$.

\begin{claim}\label{ClnuH}
$d(u)\geq n+m/2$; and for every $v\in V(H)$, $d(v)=n+m/2-\theta$.
\end{claim}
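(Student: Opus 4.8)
The plan is to prove the two assertions separately, deriving the degree lower bound from a parity count and the exact-degree statement from the minimality of the size of $G$.

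For the bound $d(u)\geq n+m/2$ I would argue by contradiction. Since $u$ has maximum degree, (\ref{EqdeltaG}) already gives $d(u)\geq\delta(G)\geq n+m/2-\theta$, so there is nothing to prove when $\theta=0$. When $\theta=1$, both $n$ and $m/2$ are even; if $d(u)$ were equal to $n+m/2-1$, then every vertex would have degree between $\delta(G)\geq n+m/2-1$ and $\Delta(G)=d(u)=n+m/2-1$, forcing $G$ to be $(n+m/2-1)$-regular. The point is that this is impossible by the handshake lemma: with $n$ and $m/2$ even, the order $\nu(G)=2n+m/2-1$ and the common degree $n+m/2-1$ are both odd, so the sum of degrees would be odd. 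Hence $d(u)\geq n+m/2$ in all cases.

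For the second assertion I would exploit that $G$ has minimum size among all graphs of order $2n+m/2-\theta$ whose complement has no $K_{1,n}$ and which contain no $W_m$. The key observation is monotonicity: deleting an edge can never create a $W_m$, so for any edge $xy$ the graph $G-xy$ is still $W_m$-free, and by minimality it can fail to be a counterexample only because its complement acquires a $K_{1,n}$, i.e.\ $\delta(G-xy)<n+m/2-\theta$. This forces at least one of $x,y$ to have degree exactly $n+m/2-\theta$ in $G$, and consequently the set $S$ of vertices of degree strictly greater than $n+m/2-\theta$ is independent. It then remains to locate $u$ relative to $S$: if $d(u)>n+m/2-\theta$ then $u\in S$, so no neighbour of $u$ lies in $S$, and every $v\in N(u)=V(H)$ has $d(v)\leq n+m/2-\theta$, which together with $\delta(G)\geq n+m/2-\theta$ gives $d(v)=n+m/2-\theta$. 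The only remaining possibility is $d(u)=n+m/2-\theta$, which by the first assertion can occur only when $\theta=0$; but then $d(u)=\Delta(G)=n+m/2=\delta(G)$, so $G$ is regular of degree $n+m/2-\theta$ and every vertex, in particular every $v\in V(H)$, has the claimed degree.

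The main obstacle I anticipate is the second assertion: correctly extracting from the hypothesis of minimum size the statement that every edge has an endpoint of minimum degree. The subtle point is that edge deletion is harmless for the $W_m$-free condition but not for the degree condition, so the minimality must be channelled entirely through the minimum-degree constraint (equivalently, through the $K_{1,n}$-freeness of the complement). By comparison, the parity argument for the first assertion and the bookkeeping of the degenerate regular case are routine.
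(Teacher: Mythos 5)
Your proof is correct and takes essentially the same route as the paper: the handshake/parity argument for $d(u)\geq n+m/2$ when $\theta=1$, and the deletion of an edge incident with a vertex of $H$ of too-high degree to contradict the minimality of the size of $G$. The paper simply deletes the single edge $uv$ (noting $d(u)\geq d(v)$) rather than first showing that the set of vertices of degree exceeding $n+m/2-\theta$ is independent, but the underlying idea is identical.
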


\begin{proof}
If $\theta=0$, then by (\ref{EqdeltaG}), $d(u)\geq n+m/2$. If
$\theta=1$, then $n$ and $m/2$ are both even. Thus $\nu(G)=2n+m/2-1$
is odd. If every vertex of $G$ has degree $2n+m/2-1$, then $G$ has
an even order, a contradiction. This implies $d(u)\geq n+m/2$.

Let $v$ be a vertex in $H$. Clearly $d(v)\geq\delta(G)\geq
n+m/2-\theta$. If $d(v)\geq n+m/2-\theta+1$, then $d(u)\geq d(v)\geq
n+m/2-\theta+1$. Thus $G'=G-uv$ has size less than $G$ with
$\delta(G')\geq n+m/2-\theta$. Since $G'$ is a subgraph of $G$, it
contains no $W_m$, a contradiction.
\end{proof}

By Claim \ref{ClnuH}, we assume that
\begin{align}\label{EqnuH}
\nu(H)=n+m/2+\tau, \mbox{ where } \tau\geq 0.
\end{align}

\begin{claim}\label{CldeltaH}
$\delta(H)\geq m/2+\tau$.
\end{claim}

\begin{proof}
Let $v$ be an arbitrary vertex of $H$. By Claim \ref{ClnuH},
$d(v)=n+m/2-\theta$. Note that
$\nu(G-H)=(2n+m/2-\theta)-(n+m/2+\tau)=n-\theta-\tau$. Thus
$$d_H(v)\geq d(v)-\nu(G-H)=(n+m/2-\theta)-(n-\theta-\tau)=m/2+\tau.$$
Thus the claim holds.
\end{proof}

\begin{claim}\label{ClSeparable}
$H$ is separable.
\end{claim}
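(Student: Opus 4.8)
The plan is to argue by contradiction: suppose $H$ is $2$-connected (that is, not separable) and produce a cycle $C_m$ inside $H$. Since $u$ is adjacent to every vertex of $N(u)=V(H)$, such a $C_m$ together with $u$ as hub yields a copy of $W_m$ in $G$, contradicting our standing assumption that $G$ has no $W_m$. The task therefore reduces to locating a cycle of length exactly $m$ in $H$, and I would split this according to whether $H$ is bipartite. The degree and order bounds I will lean on are $\delta(H)\ge m/2+\tau$ from Claim \ref{CldeltaH} and $\nu(H)=n+m/2+\tau$ from (\ref{EqnuH}), together with the hypothesis $n+2\le m\le 2n-2$.

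First suppose $H$ is non-bipartite. I would check the hypothesis of Theorem \ref{ThBrFaGo}: the inequality $\delta(H)\ge(\nu(H)+2)/3$ rearranges to $m+2\tau\ge n+2$, which holds because $m\ge n+2$ and $\tau\ge 0$. Hence $H$ is weakly pancyclic with girth $3$ or $4$. Invoking $2$-connectivity, Theorem \ref{ThDi} gives $c(H)\ge\min\{2\delta(H),\nu(H)\}\ge m$, since $2\delta(H)\ge m+2\tau\ge m$ and $\nu(H)=n+m/2+\tau>m$ (using $n\ge m/2+1$). As $g(H)\le 4<6\le m\le c(H)$, weak pancyclicity delivers a $C_m$ in $H$.

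Next suppose $H$ is bipartite, with parts $A$ and $B$, say $|A|\le|B|$. Every vertex has all of its $\ge m/2+\tau$ neighbors on the opposite side, so $|A|,|B|\ge m/2+\tau$, and from $|A|+|B|=n+m/2+\tau$ I get $|B|\le n$. To pin the cycle length to exactly $m$ I would pass to a balanced piece: choose $X'\subseteq A$ with $|X'|=m/2$ and apply Theorem \ref{ThJa} to the bipartite subgraph $H[X'\cup B]$ with sides $X'$ and $B$. Here $2\le|X'|=m/2\le|B|$, and each $x\in X'$ retains its full degree $d_H(x)\ge m/2+\tau$ in this subgraph; the required bound $d(x)\ge\max\{|X'|,|B|/2+1\}$ follows from $m/2+\tau\ge m/2=|X'|$ and from $|B|/2+1\le n/2+1\le m/2$ (the last step again using $m\ge n+2$). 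Theorem \ref{ThJa} then produces a cycle through all of $X'$, of length $2|X'|=m$, that is, a $C_m$ in $H$.

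In either case the $C_m$ extends to a $W_m$ in $G$, the desired contradiction, so $H$ must be separable. The main obstacle is the bipartite case: Theorem \ref{ThDi} and Theorem \ref{ThBrFaGo} only yield long or weakly pancyclic behaviour, and the latter explicitly fails for bipartite graphs, so neither directly forces a cycle of the precise even length $m$. The decisive trick is to restrict to a balanced set $X'$ of size $m/2$ so that Jackson's theorem returns a cycle of length exactly $2|X'|=m$; verifying Jackson's threshold $\max\{|X'|,|B|/2+1\}$ is where the hypothesis $m\ge n+2$ is genuinely used.
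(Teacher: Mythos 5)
Your proposal is correct and follows essentially the same route as the paper: assuming $H$ is $2$-connected, the non-bipartite case is handled by combining Dirac's theorem (Theorem \ref{ThDi}) with weak pancyclicity from Brandt et al.\ (Theorem \ref{ThBrFaGo}, which the paper mistakenly cites as Theorem \ref{ThSuBa}), and the bipartite case by trimming one side to a set $X'$ of size $m/2$ and applying Jackson's theorem (Theorem \ref{ThJa}) to force a cycle of length exactly $m$. Your write-up is, if anything, slightly more careful than the paper's in verifying $\nu(H)>m$ for Dirac and both branches of Jackson's degree condition.
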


\begin{proof}
By (\ref{EqnuH}), $\nu(H)\geq m\geq 3$. Suppose to contrary that $H$
is 2-connected. By Claim \ref{CldeltaH} and Theorem \ref{ThDi},
$c(G)\geq m$. Also note that $$3\delta(H)\geq 3m/2+3\tau\geq
n+m/2+3\tau+2\geq\nu(H)+2,$$ i.e., $\delta(H)\geq(\nu(H)+2)/3$.

If $H$ is non-bipartite, then by Theorem \ref{ThSuBa}, $H$ is weakly
pancyclic and of girth 3 or 4. Thus $H$ contains $C_m$. Note that
$u$ is adjacent to every vertex of the $C_m$, hence $G$ contains a $W_m$,
a contradiction.

If $H$ is bipartite, say with partition sets $X$ and $Y$, then $|X|\geq m/2+\tau$ and
$$|Y|=\nu(H)-|X|\leq(n+m/2+\tau)-(m/2+\tau)=n,$$
since $\delta(H)\geq m/2+\tau$.
Let $X'$ be a subset of $X$ with $|X'|=m/2$. Note that for every vertex $x$ of $X'$,
$$d_Y(x)=d_H(x)\geq m/2\geq n/2+1\geq|Y|/2+1.$$ By Theorem
\ref{ThJa}, the subgraph of $H$ induced by $X'\cup Y$ contains a
$C_m$. Thus $G$ contains a $W_m$, a contradiction.
\end{proof}

If $H$ is disconnected, then $H$ has at least two components; if $H$
is connected, then $H$ has at least two end-blocks. Now let $D$ be a
component or an end-block of $H$ such that $\nu(D)$ is as small as
possible. We define a constant $\varepsilon$ such that
$\varepsilon=1$ if $D$ is an end-block of $H$, and $\varepsilon=0$
otherwise. Thus
\begin{align}\label{EqnuD}
\nu(D)\leq(\nu(H)+\varepsilon)/2.
\end{align}

If $D$ is an end-block of $H$, then let $z$ be the cut-vertex of $H$
contained in $D$.

\begin{claim}\label{ClNeighbor}
For every two vertices $v,w\in V(D)$ which are not cut-vertices of
$H$, $|N_I(v)\cap N_I(w)|\geq m/2-1$.
\end{claim}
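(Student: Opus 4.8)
The plan is to count degrees into $I$ and conclude by inclusion--exclusion. First I would record the size of $I$: since $\nu(G)=2n+m/2-\theta$ and, by (\ref{EqnuH}), $\nu(H)=n+m/2+\tau$, the set $I=V(G)\setminus(\{u\}\cup N(u))$ has $|I|=\nu(G)-1-\nu(H)=n-\theta-\tau-1$.

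Next, for a vertex $v\in V(D)$ that is not a cut-vertex of $H$, I would partition its neighborhood according to the three parts $\{u\}$, $V(H)=N(u)$, and $I$ of $V(G)$. Since $v\in N(u)$, it is adjacent to $u$, so $d(v)=1+d_H(v)+d_I(v)$; and by Claim \ref{ClnuH} we have $d(v)=n+m/2-\theta$. The key structural observation is that, because $v$ is not a cut-vertex of $H$, every $H$-neighbor of $v$ lies in $D$ (if $D$ is a component this is clear, and if $D$ is an end-block then $v\neq z$, so $v$ reaches the rest of $H$ only through the cut-vertex $z$, which it is not adjacent to outside $D$). Hence $d_H(v)=d_D(v)\leq\nu(D)-1$, which gives the lower bound $d_I(v)\geq n+m/2-\theta-\nu(D)$, and the identical bound holds for $w$.

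Finally I would apply $|N_I(v)\cap N_I(w)|\geq d_I(v)+d_I(w)-|I|$, substitute the two degree bounds together with the value of $|I|$, and then invoke (\ref{EqnuD}) in the form $2\nu(D)\leq\nu(H)+\varepsilon=n+m/2+\tau+\varepsilon$. After simplification this yields $|N_I(v)\cap N_I(w)|\geq m/2+1-\theta-\varepsilon$. Since $\theta,\varepsilon\in\{0,1\}$ we have $\theta+\varepsilon\leq 2$, so the expression is at least $m/2-1$, which is exactly the asserted bound (with equality threatened only in the worst case $\theta=\varepsilon=1$).

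I do not expect a genuine obstacle here; the only step needing care is the structural claim that a non-cut-vertex of $D$ sends all of its $H$-edges into $D$. That fact is precisely what couples the smallness of $D$ (through (\ref{EqnuD})) to a large degree into the small set $I$, and the bookkeeping with $\theta$ and $\varepsilon$ is what makes the final inequality tight.
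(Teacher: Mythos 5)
Your proposal is correct and follows essentially the same route as the paper: bound $d_I(v)\geq d(v)-\nu(D)$ using that a non-cut-vertex of $D$ has all its $H$-neighbours inside $D$, apply $|N_I(v)\cap N_I(w)|\geq d_I(v)+d_I(w)-|I|$, and invoke (\ref{EqnuD}) to reach $m/2+1-\theta-\varepsilon\geq m/2-1$. The only (harmless) difference is that you substitute the exact degree $d(v)=n+m/2-\theta$ from Claim \ref{ClnuH} and the explicit value of $|I|$, where the paper works with $\delta(G)$ and $|I|=\nu(G)-1-\nu(H)$ symbolically; the arithmetic is identical.
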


\begin{proof}
Note that $d_I(v)=d(v)-1-d_H(v)\geq d(v)-\nu(D)$, and $d_I(w)\geq
d(w)-\nu(D)$.
\begin{align*}
|N_I(v)\cap N_I(w)| & \geq d_I(v)+d_I(w)-|I|\geq d(v)+d(w)-2\nu(D)-|I|\\
    & \geq 2\delta(G)-(\nu(H)+\varepsilon)-|I|=2\delta(G)-\nu(G)+1-\varepsilon\\
    & =2(n+m/2-\theta)-(2n+m/2-\theta)+1-\varepsilon\\
    & =m/2+1-\theta-\varepsilon\geq m/2-1.
\end{align*}
Thus the claim holds.
\end{proof}

Suppose that there is a vertex $v\in V(D)$ which is not a cut-vertex
of $H$ such that $v$ has $m/2$ neighbors in $V(D)$ each of which is
not a cut-vertex of $H$. Then let $X$ be the set of such $m/2$
neighbors of $v$ and $Y=\{u\}\cup N_I(v)$. Let $B$ be the bipartite
subgraph of $G$ with partition sets $X$ and $Y$, and for any two
vertices $x\in X$ and $y\in Y$, $xy\in E(B)$ if and only if $xy\in
E(G)$.

Note that $|X|=m/2$. By Claim \ref{ClNeighbor}, every vertex of $X$
has at least $m/2$ neighbors in $Y$. By Claims \ref{ClnuH} and
\ref{CldeltaH}, $d(v)=n+m/2-\theta$ and $d_H(v)\geq m/2+\tau$. Thus
$|Y|=d(v)-d_H(v)\leq n-\theta-\tau$. Since $m\geq n+2$,
$m/2\geq|Y|/2+1$. By Theorem \ref{ThJa}, $B$ contains a $C_m$. Note
that $v$ is adjacent to every vertex of the $C_m$, hence $G$ has a $W_m$,
a contradiction.

So we conclude that $D$ is an end-block of $H$ (i.e.,
$\varepsilon=1$), and every vertex $v\in V(D)\backslash\{z\}$ has at
most $m/2-1$ neighbors in $V(D)\backslash\{z\}$. By Claim
\ref{CldeltaH}, we can see that $z$ is adjacent to every vertex in
$V(D)\backslash\{z\}$ and every vertex in $V(D)\backslash\{z\}$ has
degree in $H$ exactly $m/2$ and $\tau=0$.

\begin{claim}
Every vertex in $V(D)\backslash\{z\}$ is adjacent to every vertex in
$I$.
\end{claim}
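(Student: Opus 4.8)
The plan is to prove this claim by a direct degree count, exploiting the structural facts just derived about the end-block $D$. First I would pin down the size of $I$. Since we have just established that $\tau=0$, equation (\ref{EqnuH}) gives $\nu(H)=n+m/2$, and as $V(G)$ is partitioned into $\{u\}$, $V(H)$, and $I$, we get
$$|I|=\nu(G)-1-\nu(H)=(2n+m/2-\theta)-1-(n+m/2)=n-\theta-1.$$

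Next I would fix an arbitrary vertex $v\in V(D)\backslash\{z\}$ and decompose its degree according to this same partition. Because $v\in V(H)=G[N(u)]$, $v$ is adjacent to $u$, so $N(v)=\{u\}\cup N_H(v)\cup N_I(v)$ and hence $d(v)=1+d_H(v)+d_I(v)$. Now I would substitute the two values already in hand for such a $v$: by Claim \ref{ClnuH}, $d(v)=n+m/2-\theta$, and by the remarks preceding this claim every vertex of $V(D)\backslash\{z\}$ has $d_H(v)=m/2$ (all of $v$'s neighbors in $H$ lie in the end-block $D$, since $v\neq z$). Solving for the remaining term yields
$$d_I(v)=d(v)-1-d_H(v)=(n+m/2-\theta)-1-m/2=n-\theta-1=|I|.$$
Since $v$ has $|I|$ neighbors inside a set of size $|I|$, it must be adjacent to every vertex of $I$, which is exactly the assertion of the claim.

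I do not expect any genuine obstacle here: the claim is essentially arithmetic, and all the real work was done in the preceding paragraph, where minimality of $G$ and Claim \ref{CldeltaH} forced $\tau=0$ and $d_H(v)=m/2$ for every non-cut-vertex $v$ of $D$. The only point requiring care is to correctly account for the edge $uv$ (the isolated ``$1$'' in the degree decomposition) and to use that $v\neq z$ so that $d_H(v)=d_D(v)=m/2$ is the quantity being subtracted; once these are noted, the equality $d_I(v)=|I|$ and hence full adjacency to $I$ follow immediately.
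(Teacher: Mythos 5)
Your proof is correct and is essentially identical to the paper's: both compute $|I|=\nu(G)-1-\nu(H)=n-\theta-1$ and $d_I(v)=d(v)-1-d_H(v)=n-\theta-1$ using $d(v)=n+m/2-\theta$ and $d_H(v)=m/2$, then conclude that $v$ must be adjacent to all of $I$. No issues.
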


\begin{proof}
Let $v$ be a vertex in $V(D)\backslash\{z\}$. Since
$d(v)=n+m/2-\theta$ and $d_H(v)=m/2$. we have
$$d_I(v)=d(v)-1-d_H(v)=n-1-\theta.$$
Also note that
$$|I|=\nu(G)-1-\nu(H)=(2n+m/2-\theta)-1-(n+m/2)=n-1-\theta.$$
This implies that $v$ is adjacent to every vertex in $I$.
\end{proof}

\begin{case}
  $N_I(z)\neq\emptyset$.
\end{case}

Note that $|I|=n-1-\theta\geq m/2-1$. Let $v \in V(D)\backslash\{z\}$ and $u_1,u_2,\ldots,u_{m/2-1}$
be $m/2-1$ vertices in $I$ such that $zu_1\in E(G)$, and let
$v_1,v_2\ldots,v_{m/2-1}$ be $m/2-1$ vertices in
$N_D(v)\backslash\{z\}$. Then $uzu_1v_1u_2v_2\cdots
u_{m/2-1}v_{m/2-1}u$ is a $C_m$. Since $v$ is adjacent to every
vertex of the $C_m$, $G$ contains a $C_m,$ a contradiction.

\begin{case}
  $N_I(z)=\emptyset$ and $G[I]$ is not empty.
\end{case}

Let $v \in V(D)\backslash\{z\}$ and $u_1,u_2,\ldots,u_{m/2-1}$ be
$m/2-1$ vertices in $I$ such that $u_1u_2\in E(G)$, and let
$v_1,v_2\ldots,v_{m/2-1}$ be $m/2-1$ vertices in
$N_D(v)\backslash\{z\}$. Then $uzv_1u_1u_2v_2u_3v_3\cdots$
$u_{m/2-1}v_{m/2-1}u$ is a $C_m$. Since $v$ is adjacent to every
vertex of the $C_m$, $G$ contains a $C_m,$ a contradiction.

\begin{case}
  $N_I(z)=\emptyset$ and $G[I]$ is empty.
\end{case}

Let $w$ be an arbitrary vertex in $I$. Note that $w$ is nonadjacent
to every vertex in $\{u,z\}\cup I$. Hence
$$d(w)\leq\nu(G)-2-|I|=(2n+m/2-\theta)-2-(n-1-\theta)=n+m/2-1.$$
Since $d(w)\geq\delta(G)=n+m/2-\theta$, we can see that $\theta=1$
and $w$ is adjacent to every vertex of $V(H)\backslash\{z\}$.
Moreover, every vertex in $I$ is adjacent to every vertex in
$V(H)\backslash\{z\}$.

Since $\theta=1$, by Claim \ref{ClnuH}, $d(u)=n+m/2$ and
$d(z)=n+m/2-1$. Thus there is a vertex $x\in V(H)\backslash\{z\}$
such that $xz\notin E(G)$. By Claim \ref{CldeltaH}, let
$v_1,v_2,\ldots,v_{m/2}$ be $m/2$ vertices in $N_H(x)$ and
$u_1,u_2,\ldots,u_{m/2}$ be $m/2$ vertices in $\{u\}\cup I$. Then
$u_1v_1u_2v_2\cdots u_{m/2}v_{m/2}u_1$ is a $C_m$. Since $x$ is
adjacent to every vertex of the $C_m$, $G$ contains a $W_m$, a
contradiction.

The proof is complete.

\end{document}